\theoremstyle{plain}
\newtheorem{theorem}{Theorem}[section]
\theoremstyle{definition}
\newtheorem{remark}{\textnormal{\textbf{Remark}}}
\theoremstyle{remark}
\numberwithin{equation}{section}
\begin{document}

\title[Sequences on models of elliptic curves]
{Rational sequences on different models of elliptic curves}

\author{ Gamze Sava\c{s} \c{C}EL\.{I}K, Mohammad Sadek and G\"{o}khan Soydan}

\address{{\bf Gamze Sava\c{s} \c{C}elik}\\
Department of Mathematics \\
Bursa Uluda\u{g} University\\
 16059 Bursa, Turkey}
\email{gamzesavas91@gmail.com; gamzesavascelik@gmail.com }

\address{{\bf Mohammad Sadek}\\
	Faculty of Engineering and Natural Sciences\\ Sabanc{\i} University\\
  Tuzla, \.{I}stanbul, 34956 Turkey}
\email{mmsadek@sabanciuniv.edu}

\address{{\bf G\"{o}khan Soydan} \\
	Department of Mathematics \\
	Bursa Uluda\u{g} University\\
	16059 Bursa, Turkey}
\email{gsoydan@uludag.edu.tr }

\newcommand{\acr}{\newline\indent}

\subjclass[2010]{11D25, 11G05, 14G05}
\keywords{Elliptic curve, Edwards curve, Huff curve, rational sequence, rational point}

\begin{abstract}
Given a set $S$ of elements in a number field $k$, we discuss the existence of planar algebraic curves over $k$ which possess rational points whose $x$-coordinates are exactly the elements of $S$. If the size $|S|$ of $S$ is either $4,5$, or $6$, we exhibit infinite families of (twisted) Edwards curves and (general) Huff curves for which the elements of $S$ are realized as the $x$-coordinates of rational points on these curves. This generalizes earlier work on progressions of certain types on some algebraic curves.
\end{abstract}

\maketitle

\section{Introduction}\label{sec:1}
An algebraic (affine) plane curve $C$ of degree $d$ over some field $k$ is defined by an equation of the form
\[\{(x,y)\in k^2: f(x,y)=0\}\] where $f $ is a polynomial of degree $d$. The algebraic affine plane curve $C$ can also be extended to the projective plane by homogenising the polynomial $f$. If $P=(x,y)$, then we write $x=x(P)$ and $y=y(P)$.

Studying the set of $k$-rational points on $C$, $C(k)$, has been subject to extensive research in arithmetic geometry and number theory, especially when $k$ is a number field.  For example, if $f$ is a polynomial of degree $2$, then one knows that $C$ is of genus $0$, and so if $C$ possesses one rational point then it contains infinitely many such points. If $f$ is of degree $3$, then $C$ is a genus $1$ curve if it is smooth. In this case, if $C(k)$ contains one rational point, then it is an elliptic curve, and according to Mordell-Weil Theorem, $C(k)$ is a finitely generated abelian group. In particular, $C(k)$ can be written as $T\times \mathbb{Z}^r$ where $T$ is the subgroup of points of finite order, and $r\ge0$ is the {\em rank} of $C$ over $k$.

In enumerative geometry, one may pose the following question. Given a set of points $S$ in $k^2$, how many algebraic plane curve $C$ of degree $d$ satisfy that $S\subseteq C(k)$? It turns out that sometimes the answer is straightforward. For example, given $10$ points in $k^2$, in order for a cubic curve to pass through these points, a system of $10$ linear equations will be obtained by substituting the points of $S$ in \[a_1x^3+a_2x^2y+a_3x^2+a_4xy^2+a_5xy+a_6x+a_7y^3+a_8y^2+a_9y+a_{10}=0\] and solving for $a_1,\cdots,a_{10}$. Therefore, there exists a unique nontrivial solution to the system if the determinant of the corresponding matrix of coefficients is zero, hence a unique cubic curve through the points of $S$. Thus, one needs linear algebra to check the existence of algebraic curves of a certain degree through various specified points in $k^2$.

In this article, we address the following, relatively harder, question. Given $S\subset k$, are there algebraic curves $C$ of degree $d$ such that for every $x\in S$, $x=x(P)$ for some $P\in C(k)$? In other words, $S$ constitutes the $x$-coordinates of a subset of $C(k)$. The latter question can be reformulated to involve $y$-coordinates instead of $x$-coordinates. It is obvious that linear algebra cannot be utilized to attack the problem as substituting with the $x$-values of $S$ will not yield linear equations.

Given a set $S=\{x_1,x_2,\cdots,x_n\}\subset k$, if $(x_i,y_i)$, $i=1,\cdots,n$, are $k$-rational points on an algebraic curve $C$, then these rational points are said to be an {\em $S$-sequence} of length $n$. In what follows, we summarize the current state of knowledge for different types of $S$.

We first describe the state-of-art when the elements of $S\subset\mathbb{Q}$ are chosen to form an {\em arithmetic progression}, Lee and V\'{e}lez \cite{LV} found infinitely many curves described by $y^{2}=x^{3}+a$ containing $S$-sequences of length $4$. Bremner \cite{Br} showed that there are infinitely many elliptic curves with $S$-sequences of length $7$ and $8$. Campbell \cite{Cam} gave a different method to produce infinite families of elliptic curves with $S$-sequences of length $7$ and $8$. In addition, he described a method for obtaining infinite families of quartic elliptic curves with $S$-sequences of length $9$, and gave an example of a quartic elliptic curve with an $S$-sequence of length $12$. Ulas \cite{Ul} first described a construction method for an infinite family of quartic elliptic curves on which there exists an $S$-sequence of length $10$. Secondly he showed that there is an infinite family of quartics containing $S$-sequences of length $12$. Macleod \cite{Mac} showed that simplifying Ulas' approach may provide a few examples of quartics with $S$-sequences of length $14$. Ulas \cite{U2} found an infinite family of genus two curves described by $y^2=f(x)$ where $\deg(f(x))=5$ possessing $S$-sequences of length $11$. Alvarado \cite{Al} showed the existence of an infinite family of such curves with $S$-sequences of length $12$. Moody \cite{Mod1} found an infinite number of Edwards curves with an $S$-sequence of length $9$. He also asked whether any such curve will allow an extension to an $S$-sequence of length $11$. Bremner \cite{BrE} showed that such curves do not exist. Also, Moody \cite{Mod3} found an infinite number of Huff curves with $S$-sequences of length $9$, and Choudhry \cite{Ch} extended Moody's result to find several Huff curves with $S$-sequences of length $11$.

Now we consider the case when the elements of $S$ form a {\em geometric progression}, Bremner and Ulas \cite{BU} obtained an infinite family of elliptic curves with $S$-sequences of length $4$, and they also pointed out infinitely many elliptic curves with $S$-sequences of length $5$.  Ciss and Moody \cite{Mod2} found infinite families of twisted Edwards curves with $S$-sequences of length $5$ and Edwards curves with $S$-sequences of length $4$. When the elements of $S\subset\mathbb Q$ are {\em consecutive squares}, Kamel and Sadek \cite{KS} constructed infinitely many elliptic curves given by the equation $y^2=ax^3+bx+c$
with $S$-sequences of length $5$. When the elements of $S\subset\mathbb Q$ are {\em consecutive cubes}, \c{C}elik and Soydan \cite{SCS} found infinitely many elliptic curves of the form $y^2=ax^3+bx+c$ with $S$-sequences of length $5$.

In the present work, we consider the following families of elliptic curves due to the symmetry enjoyed by the equations defining them: (twisted) Edwards curves and (general) Huff curves. Given an arbitrary subset $S$ of a number field $k$, we tackle the general question of the existence of infinitely many such curves with an $S$-sequence when there is no restriction on the elements of $S$. We provide explicit examples when the length of the $S$-sequence is $4, 5$, or $6$. This is achieved by studying the existence of rational points on certain quadratic and elliptic surfaces.

\section{Edwards curves with $S$-sequences of length $6$}
\label{sec:1}
Throughout this work, $ k$ will be a number field unless otherwise stated.

An {\em Edwards curve} over $k$ is defined by \begin{equation} \label{Ed1}
E_{d}: x^2+y^2=1+dx^2y^2,
\end{equation}
where $d$ is a non-zero element in $k$. It is clear that the points $(x,y)=(-1,0),(0,\pm1),(1,0)\in E_{d}(k)$. We show that given any set \[S=\{s_{-1}=-1,s_0=0,s_1=1,s_2,s_3,s_4\}\subset k,\] $s_i\ne s_j$ if $i\ne j$, there are infinitely many Edwards curves $E_d$ that possess rational points whose $x$-coordinates are $s_i$, $-1\le i\le 4$, i.e., the set $S$ is realized as $x$-coordinates in $E_d(k)$. In other words, there are infinitely many Edwards curves that possess an $S$-sequence.

We start with assuming that $s_2$ is the $x$-coordinate of a point in $E_d(k)$, then one must have
$\displaystyle y^2=\frac{s_2^2-1}{s_2^2 d-1}$, or $s_2^2 d-1 =(s_2^2-1) p^2$ for some $p=1/y\in k$.

Similarly, if $s_3$ is the $x$-coordinate of a point in $E_d(k)$, then $\displaystyle y^2=\frac{s_3^2-1}{s_3^2 d-1}$, or $s_3^2 d-1=(s_3^2-1) q^2$. So
\[d=\frac{(s_2^2-1) p^2+1}{s_2^2}=\frac{(s_3^2-1) q^2+1}{s_3^2}.\]
Thus we have the following quadratic curve
\[s_3^2\left[(s_2^2-1) p^2+1\right]-s_2^2\left[(s_3^2-1) q^2+1\right]=0\] on which we have the rational point $(p,q)=(1,1)$. Parametrizing the rational points on the latter quadratic curve yields
\begin{eqnarray*}
	p &=& \frac{2 t s_2^2 - t^2 s_2^2 - s_3^2 + s_2^2 s_3^2 - 2 t s_2^2 s_3^2 + t^2 s_2^2 s_3^2}{-t^2 s_2^2 + s_3^2 - s_2^2 s_3^2 + t^2 s_2^2 s_3^2},\\
	q &=& -\frac{(-1 + s_2^2) s_3^2 - 2 t (-1 + s_2^2) s_3^2 +t^2 s_2^2 (-1 + s_3^2)}{-(-1 + s_2^2) s_3^2 + t^2 s_2^2 (-1 + s_3^2)}.
\end{eqnarray*}
Therefore, fixing $s_2$ and $s_3$ in $k$, one sees that $p$ and $q$ lie in $k(t)$.
Now we obtain the following result.

\begin{theorem}
	\label{thm1}
	Let $s_{-1}=-1$, $s_0=0$, $s_1=1$, $s_2$, $s_3$ and $s_4$, $s_i\ne s_j$ if $i\ne j$, be a sequence in $\mathbb Z$ such that \[h(s_2,s_3)=-3 + 4 s_3^2 + s_2^4 s_3^4 + s_2^2 (4 - 6 s_3^2)\ne0\] where either $g_1(s_2,s_3)/h(s_2,s_3)^2$ or $g_2(s_2,s_3)/h(s_2,s_3)^3$ are not integers, $g_1$ and $g_2$ are defined in (\ref{eq11}). There are infinitely many Edwards curves described by
	\[E_{d}: x^2+y^2=1+d x^2y^2,\quad d\in \mathbb Q\]
	on which $s_i$, $-1\le i\le 4$, are the $x$-coordinates of rational points in $E_{d}(\mathbb Q)$. In other words, there are infinitely many Edwards curves that possess an $S$-sequence where $S=\{s_i:-1\le i\le 4\}.$
\end{theorem}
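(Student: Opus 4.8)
The plan is to convert the requirement that $s_4$ be an $x$-coordinate on $E_d$ into one more quadratic constraint on the free parameter $t$, to recognise the resulting curve as an elliptic curve, and then to produce infinitely many rational points on it by exhibiting a point of infinite order. Concretely, writing $a=s_2^2(s_3^2-1)$ and $b=s_3^2(s_2^2-1)$, the parametrisation already obtained collapses to $p=(at^2-2at+b)/(at^2-b)$, so that
\[
d(t)-1=\frac{(s_2^2-1)(p-1)(p+1)}{s_2^2}=\frac{-4a(s_2^2-1)\,t(t-1)(at-b)}{s_2^2\,(at^2-b)^2}.
\]
Imposing that $E_{d(t)}$ carry a rational point with $x$-coordinate $s_4$ means $(s_4^2 d(t)-1)/(s_4^2-1)$ must be a square; clearing the denominator $(at^2-b)^2$ and putting $W=r\,(at^2-b)$ this becomes
\[
\mathcal E\colon\ W^2=\phi(t):=(at^2-b)^2+C\,t(t-1)(at-b),\qquad C=-\tfrac{4a(s_2^2-1)s_4^2}{s_2^2(s_4^2-1)},
\]
a quartic model in $(t,W)$. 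Its leading coefficient $a^2$ is a square, so $\mathcal E$ has two rational points at infinity, and $\phi$ equals $b^2$, $(a-b)^2$, $(b^2/a-b)^2$ at $t=0,1,b/a$ — these, together with $t=\infty$, being exactly the zeros of $d(t)-1$. For $S$ outside a proper closed subset $\phi$ is separable, so $\mathcal E$ is a smooth genus-one curve carrying rational points, hence an elliptic curve.

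Next I would put $\mathcal E$ into Weierstrass form. Using one of its rational points, transform $\mathcal E$ birationally over $\mathbb Q$ into an \emph{integral} model $\widetilde{\mathcal E}\colon y^2=x^3+Ax+B$, $A,B\in\mathbb Z$, while tracking the images of the distinguished points above and of suitable combinations of them under the group law. This yields a rational point $P\in\widetilde{\mathcal E}(\mathbb Q)$ with coordinates $\bigl(g_1(s_2,s_3)/h(s_2,s_3)^2,\ g_2(s_2,s_3)/h(s_2,s_3)^3\bigr)$, where $g_1,g_2,h$ are as in \eqref{eq11}; the hypothesis $h(s_2,s_3)\neq0$ guarantees that $P$ is an affine point. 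By the Nagell--Lutz theorem every torsion point of $\widetilde{\mathcal E}$ has integral coordinates, so if $g_1/h^2\notin\mathbb Z$ or $g_2/h^3\notin\mathbb Z$ then $P$ has infinite order, and $\widetilde{\mathcal E}(\mathbb Q)$ — equivalently $\mathcal E(\mathbb Q)$ — is infinite.

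To finish, an infinite set of rational points on $\mathcal E$ produces infinitely many $t\in\mathbb Q$, hence, since $d(t)$ is a non-constant rational function of $t$ (here one uses $s_2,s_3\notin\{0,\pm1\}$), infinitely many values $d\in\mathbb Q$. Discarding the finitely many $t$ for which $d(t)\in\{0,1\}$, or for which some $s_i^2d(t)=1$, or which are poles of $d$, infinitely many admissible $d$ remain. For each such $d$ the curve $E_d$ is a genuine Edwards curve containing $(\pm1,0),(0,\pm1)$, and by construction $s_2,s_3,s_4$ occur as $x$-coordinates of rational points in $E_d(\mathbb Q)$; thus $E_d$ possesses the $S$-sequence, and we obtain infinitely many such $E_d$.

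The main obstacle is the explicit work in the Weierstrass step: writing down the birational map to an integral model so that Nagell--Lutz genuinely applies, deciding which combination of the distinguished points of $\mathcal E$ to push forward, and verifying that the coordinates of the resulting point are exactly $g_1/h^2$ and $g_2/h^3$. One must also check that the excluded loci (the discriminant of $\mathcal E$, the poles of $d$, and the solutions of $d\in\{0,1\}$ and $s_i^2d=1$) are indeed finite, so that they do not exhaust the rational points; both tasks are elementary but computationally heavy.
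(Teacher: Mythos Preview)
Your plan is correct and is essentially the paper's proof: reduce the $s_4$-condition to a rational point on a quartic genus-one curve in $t$, pass to a Weierstrass model, and use Lutz--Nagell to force positive rank. The paper resolves your ``main obstacle'' explicitly: on the model $y^2=x^3-27Ix-27J$ (with $I,J$ the standard quartic invariants, integral because the $s_i\in\mathbb Z$) the image of your point $(t,W)=(0,b)$ is the \emph{integral} point $P=\bigl(-12(s_2^2-1)(s_3^2-1)(s_2^2+s_3^2-3),\,-216(s_2^2-1)^2(s_3^2-1)^2\bigr)$, and it is $3P$---not $P$ itself---whose coordinates are $(g_1/h^2,\,g_2/h^3)$, so Lutz--Nagell is applied to $3P$ to conclude that $P$ has infinite order.
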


\begin{proof}
	Substituting the value for $p$ in $\displaystyle d=\frac{(s_2^2-1) p^2+1}{s_2^2}$ yields that
	\begin{align*}
	(-t^2 s_2^2 + s_3^2 - s_2^2 s_3^2 + t^2 s_2^2 s_3^2)^2d&=(s_3^4 - 2 s_2^2 s_3^4 + s_2^4 s_3^4)+(4 s_3^2 - 8 s_2^2 s_3^2 + 4 s_2^4 s_3^2\\
	& - 4 s_3^4 + 8 s_2^2 s_3^4 - 4 s_2^4 s_3^4)t+( -4 s_2^2 + 4 s_2^4 - 4 s_3^2 \\
	&+ 14 s_2^2 s_3^2 -10 s_2^4 s_3^2 + 4 s_3^4 - 10 s_2^2 s_3^4 + 6 s_2^4 s_3^4)t^2\\
	&+(4 s_2^2 - 4 s_2^4 - 8 s_2^2 s_3^2 + 8 s_2^4 s_3^2 + 4 s_2^2 s_3^4\\
	&-4 s_2^4 s_3^4)t^3+(s_2^4 - 2 s_2^4 s_3^2 + s_2^4 s_3^4)t^4.
	\end{align*}
	Thus, for fixed values of $s_2$ and $s_3$, we have $d\in \mathbb Q(t)$.
	
	Now we show the existence of infinitely many values of $t$ such that $s_4$ is the $x$-coordinate of a rational point on $E_d$. In fact, we will show that $t$ can be chosen to be the $x$-coordinate of a rational point on an elliptic curve with positive Mordell-Weil rank, hence the existence of infinitely many such possible values for $t$. Forcing $(s_4,r)$ to be a point in $E_d(\mathbb Q)$ for some rational $r$ yields that  \begin{equation}\label{eq:1}r^2=\frac{s_4^2-1}{s_4^2d-1}=(A_0+A_1 t+A_2t^2+A_3t^3+A_4 t^4)/B(t)^2,\end{equation}
	where $A_i\in \mathbb Z$ and $B(t)=-t^2 s_2^2  + t^2 s_2^2 s_3^2+ s_3^2 - s_2^2 s_3^2.$ This implies that $A_0+A_1 t+A_2t^2+A_3t^3+A_4 t^4$ must be a rational square.
	This yields the elliptic curve $C$ defined by \[z^2=A_0+A_1 t+A_2t^2+A_3t^3+A_4 t^4,\] with the following rational point \[(t,z)=\left(0, s_3^2(s_2^2-1) \right).\] The latter elliptic curve is isomorphic to the elliptic curve described by the Weierstrass equation
	$E_{I,J}:y^2=x^3-27 Ix-27 J$ where
	\begin{eqnarray*}
		I&=& 12 A_0A_4 - 3 A_1A_3 + A_2^2 \\
		J&=& 72 A_0A_2A_4 + 9 A_1A_2A_3 - 27 A_1^2A_4 - 27A_0A_3^2- 2 A_2^3,
	\end{eqnarray*}
	see for example \cite[\S 2]{Stoll}. The latter elliptic curve has the following rational point
	\[P=\left(-12 (-1 + s_2^2) (-1 + s_3^2) (-3 + s_2^2 + s_3^2),-216 (-1 + s_2^2)^2 (-1 + s_3^2)^2\right).\]
	One notices that the coordinates of $3P$ are rational functions. Indeed,
	\begin{eqnarray}\label{eq11}3P=\left(\frac{g_1(s_2,s_3)}{h(s_2,s_3)^2},\frac{g_2(s_2,s_3)}{h(s_2,s_3)^3}\right),\qquad\textrm{ where } g_1,g_2\in\mathbb Q[s_2,s_3]\end{eqnarray} and
	$$h(s_2,s_3)=-3 + 4 s_3^2 + s_2^4 s_3^4 + s_2^2 (4 - 6 s_3^2).$$ Hence, as long as $h(s_2,s_3)\ne 0$, and $g_1/h^2\not\in\mathbb Z$ or $g_2/h^3\not\in\mathbb Z$, one sees that $3P$ is a point of infinite order by virtue of Lutz-Nagell Theorem. Thus, $P$ itself is a point of infinite order. It follows that $E_{I,J}$ is of positive Mordell-Weil rank.
	Since $C$ is isomorphic to $E_{I,J}$, it follows that $C$ is also of positive Mordell-Weil rank. Therefore, there are infinitely many rational points $(t,z)\in C(\mathbb Q)$, each giving rise to a value for $d$, by substituting in (\ref{eq:1}), hence an Edwards curve $E_d$ possessing the aforementioned rational points. That infinitely many of these curves are pairwise non-isomorphic over $\mathbb Q$ follows, for instance, from Proposition 6.1 in \cite{Edwards}.
\end{proof}

\section{ Twisted Edwards curves with $S$-sequences of length $4$}

A {\em Twisted Edwards curve} over $k$ is given by \begin{equation} \label{Ed2}
E_{a,d}: ax^2+y^2=1+dx^2y^2,
\end{equation}
where $a$ and $d$ are nonzero elements in $k.$ Note that the point $(x,y)=(0,\pm1)\in E_{a,d}(k)$. Given a set $\{u_0=0, u_1,u_2,u_3\}\subset k$, $u_i\ne u_j$ if $i\ne j$,
we prove that there are infinitely many twisted Edwards curves $E_{a,d}$ for which $S$ is realized as the $x$-coordinates of rational points on $E_{a,d}$.

We begin by assuming that $u_1$ is the $x$-coordinate of a point in $E_{a,d}(k)$, then one must get
$\displaystyle y^2=\frac{au_1^2-1}{u_1^2 d-1}$, or $u_1^2 d-1 =(au_1^2-1) i^2$ for some $i\in k$.

Now, if $u_2$ is the $x$-coordinate of a point in $E_{a,d}(k)$, then $\displaystyle y^2=\frac{au_2^2-1}{u_2^2 d-1}$ or $u_2^2 d-1 =(au_2^2-1) j^2$. So
\[d=\frac{(au_1^2-1) i^2+1}{u_1^2}=\frac{(au_2^2-1) j^2+1}{u_2^2}.\]
Hence we obtain the following quadratic surface
\[u_2^2\left[(au_1^2-1) i^2+1\right]-u_1^2\left[(au_2^2-1) j^2+1\right]=0,\] on which we have the rational point $(i,j)=(1,1)$. Solving the above quadratic surface gives the following
\begin{eqnarray*}
	i&=&\frac{-au_1^2u_2^2+u_2^2+2tau_1^2u_2^2-2tu_1^2-at^2u_1^2u_2^2+u_1^2t^2}{au_1^2u_2^2-u_2^2-at^2u_1^2u_2^2+u_1^2t^2},\\
	j&=& \frac{-2atu_1^2u_2^2+2tu_2^2+at^2u_1^2u_2^2-u_1^2t^2+au_1^2u_2^2-u_2^2}{au_1^2u_2^2-u_2^2-at^2u_1^2u_2^2+u_1^2t^2}.
\end{eqnarray*}

Now we get the following result.
\begin{theorem}
	Let $u_0=0$, $u_1$, $u_2$ and $u_3$ , $u_i\ne u_j$ if $i\ne j$, be a sequence in $\mathbb Z$ such that $h(u_1,u_2)\ne 0$, and either $g_1(s_2,s_3)/h(s_2,s_3)^2$ or $g_2(s_2,s_3)/h(s_2,s_3)^3$ are not integers, where $h,g_1,g_2$ are defined in (\ref{eq3}). There are infinitely many twisted Edwards curves described by
	\[E_{a,d}: ax^2+y^2=1+d x^2y^2,\quad d\in\mathbb{Q},\;a\in\mathbb{Q}^{\times} \textrm{ is arbitrary}\]
	on which $u_i$, $0\le i\le 3$, are the $x$-coordinates of rational points in $E(\mathbb Q)$. In other words, there are infinitely many twisted Edwards curves that possess an $S$-sequence where $S=\{u_i:0\le i\le 3\}.$
\end{theorem}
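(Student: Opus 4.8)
The plan is to mimic the structure of the proof of Theorem~\ref{thm1} almost verbatim, since the twisted Edwards setup differs from the Edwards setup only in the extra parameter $a$, which will be carried along as an arbitrary nonzero constant. First I would substitute the parametrized value of $i$ into the expression $d=\bigl[(au_1^2-1)i^2+1\bigr]/u_1^2$ to obtain $d$ as an element of $\mathbb{Q}(a)(t)$: concretely, after clearing the denominator $B(t)=au_1^2u_2^2-u_2^2-at^2u_1^2u_2^2+u_1^2t^2$, we get $B(t)^2 d$ equal to a quartic polynomial in $t$ whose coefficients are polynomials in $u_1,u_2,a$. (For fixed $u_1,u_2$ and fixed arbitrary $a$, this genuinely lies in $\mathbb{Q}(t)$.)

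Next I would force $(u_3,r)\in E_{a,d}(\mathbb Q)$, which via $r^2=(au_3^2-1)/(u_3^2 d-1)$ and the previous expression for $d$ yields, after clearing denominators, a requirement that a quartic $A_0+A_1t+A_2t^2+A_3t^3+A_4t^4$ (with $A_i\in\mathbb{Z}[u_1,u_2,a]$) be a rational square. This defines a genus-one curve $C:z^2=A_0+A_1t+\cdots+A_4t^4$ possessing the obvious rational point coming from $t=0$ (namely $d$ is then the common value with $i=j=1$, giving $u_3$ as an $x$-coordinate automatically when $u_3=u_1$ is excluded — more precisely the point $(t,z)$ with $t=0$ comes from the same base point $(i,j)=(1,1)$ as in Theorem~\ref{thm1}). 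Using the standard transformation (as in \cite[\S2]{Stoll}) I would pass to a Weierstrass model $E_{I,J}:y^2=x^3-27Ix-27J$ with $I=12A_0A_4-3A_1A_3+A_2^2$ and $J=72A_0A_2A_4+9A_1A_2A_3-27A_1^2A_4-27A_0A_3^2-2A_2^3$, and track the image $P$ of the base point. One then computes $3P=\bigl(g_1(u_1,u_2)/h(u_1,u_2)^2,\; g_2(u_1,u_2)/h(u_1,u_2)^3\bigr)$ with $g_1,g_2,h\in\mathbb{Q}[u_1,u_2]$ (possibly also involving $a$), recorded as equation~(\ref{eq3}); as long as $h\ne0$ and at least one of $g_1/h^2$, $g_2/h^3$ fails to be an integer, the Lutz--Nagell theorem forces $3P$, hence $P$, to have infinite order, so $E_{I,J}$ — and therefore $C$ — has positive Mordell--Weil rank.

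Finally, positive rank of $C$ gives infinitely many rational $t$, each producing (via the quartic-to-$d$ substitution) a twisted Edwards curve $E_{a,d}$ on which $u_0=0$, $u_1$, $u_2$, $u_3$ are all realized as $x$-coordinates of rational points; discarding the finitely many degenerate $t$ (those making a denominator vanish or collapsing two of the $u_i$-points) still leaves infinitely many, and a Kodaira/$j$-invariant argument in the spirit of \cite[Proposition~6.1]{Edwards} shows infinitely many of these $E_{a,d}$ are pairwise non-isomorphic over $\mathbb{Q}$. The main obstacle, as in Theorem~\ref{thm1}, is purely computational: one must verify that carrying the symbolic parameter $a$ through does not cause the quartic to degenerate (e.g.\ become a square of a quadratic identically, which would make $C$ rational rather than genuinely elliptic and could collapse the rank argument) and that the explicit $3P$ computation indeed yields the claimed shape $g_i/h^{i+1}$ with the displayed $h(u_1,u_2)$; this is where a computer algebra check is needed, but no new idea beyond the proof of Theorem~\ref{thm1} is required.
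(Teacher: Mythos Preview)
Your proposal is correct and follows essentially the same approach as the paper's own proof: substitute the parametrized $i$ into $d=[(au_1^2-1)i^2+1]/u_1^2$ to get $d\in\mathbb{Q}(t)$, impose $(u_3,\ell)\in E_{a,d}(\mathbb{Q})$ to obtain a quartic curve $z^2=C_0+C_1t+\cdots+C_4t^4$ with the rational point $(0,u_2^2(au_1^2-1))$, pass to the Weierstrass model $E_{I,J}$ via the Stoll--Cremona invariants, and use Lutz--Nagell on $3Q$ to force positive rank. The only differences are notational (the paper calls the point $Q$ and the coefficients $C_i$), and the paper records the explicit polynomial $h(u_1,u_2)$ in equation~(\ref{eq3}), which your sketch correctly anticipates as requiring a computer-algebra verification.
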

\begin{proof}
	Substituting the expression for $i$ in $\displaystyle d=\frac{(au_1^2-1) i^2+1}{u_1^2}$ gives that
	\begin{align*}
	(au_1^2u_2^2-u_2^2-at^2u_1^2u_2^2+u_1^2t^2)^2d&=(u_1^4a^3u_2^4-2u_1^4a^2u_2^2+u_1^4a)t^4+(-8au_1^2u_2^2\\
	&+4u_1^2+4u_1^2a^2u_2^4-4u_1^4a-4u_1^4a^3u_2^4\\
	&+8u_1^4a^2u_2^2)t^3+(-4u_1^2-10u_1^2a^2u_2^4+14au_1^2u_2^2\\
	&+6u_1^4a^3u_2^4-4u_2^2-10u_1^4a^2u_2^2+4u_1^4a+4au_2^4)t^2\\
	&+(4u_2^2+8u_1^2a^2u_2^4-8au_1^2u_2^2+4u_1^4a^2u_2^2-4au_2^4\\
	&-4u_1^4a^3u_2^4)t+u_1^4a^3u_2^4-2u_1^2a^2u_2^4+au_2^4.
	\end{align*}
	Then, assuming $(u_3,\ell)\in E(\mathbb Q)$ yields
	\begin{equation}\label{eq:2}
	\ell^2=\frac{au_3^2-1}{du_3^2-1}=(C_0+C_1 t+C_2t^2+C_3t^3+C_4 t^4)/D(t)^2,
	\end{equation}
	where $C_i\in \mathbb Q$ and $D(t)=au_1^2u_2^2-u_2^2-at^2u_1^2u_2^2+u_1^2t^2.$
	
	For the latter equation to be satisfied, one needs to find rational points on the elliptic curve $C'$ defined by \[z^2=C_0+C_1 t+C_2t^2+C_3t^3+C_4 t^4\] that possesses the rational point \[(t,z)=\left(0, u_2^2(au_1^2-1) \right).\] The latter elliptic curve is isomorphic to the elliptic curve described by the Weierstrass equation
	$E_{I,J}:y^2=x^3-27 Ix-27 J$ where
	\begin{eqnarray*}
		I&=& 12 C_0C_4 - 3 C_1C_3 + C_2^2, \\
		J&=& 72 C_0C_2C_4 + 9 C_1C_2C_3 - 27 C_1^2C_4 - 27C_0C_3^2- 2 C_2^3,
	\end{eqnarray*}
	see for example \cite[\S 2]{Stoll}. The latter elliptic curve has the following rational point
	\[Q=\left(-12 (-1 + au_2^2) (-1 + au_1^2) (-3 + au_2^2 + u_1^2),-216 (-1 + au_2^2)^2 (-1 + au_1^2)^2\right).\]
	One notices that the coordinates of $3Q$ are rational functions. In fact, $$3Q=\left(\frac{g_1(u_1,u_2)}{h(u_1,u_2)^2},\frac{g_2(u_1,u_2)}{h(u_1,u_2)^3}\right),\qquad\textrm{ where } g_1,g_2\in\mathbb Q[u_1,u_2]$$ and
	\begin{align}
	\label{eq3}
	h(u_1,u_2)&=-27 - 72 u_1^2 + 36 u_1^4 + 18 u_1^2 u_2^2 - 12 u_1^4 u_2^2 - 18 u_2^4 +
	12 u_1^2 u_2^4 + u_1^4 u_2^4\nonumber\\
	& - 2 u_1^2 u_2^6 + u_2^8+  a (36 u_1^2 - 12 u_1^4 - 24 u_1^2 (-3 + u_1^2) + 36 u_2^2 +
	72 u_1^2 u_2^2 \nonumber\\
	&- 24 u_1^4 u_2^2 - 12 u_1^2 u_2^4 + 4 u_1^4 u_2^4 -
	4 (-3 + u_1^2) u_2^6)+a^2 (-144 u_1^2 u_2^2 \nonumber\\
	&+ 36 u_1^4 u_2^2 + 18 u_2^4 - 36 u_1^2 u_2^4 +
	4 u_1^4 u_2^4 + 2 u_1^2 u_2^6 - 2 u_2^8)+ a^3 (36 u_1^2 u_2^4 \nonumber\\
	&+ 4 (-3 + u_1^2) u_2^6) +a^4 u_2^8.\nonumber\\
	\end{align}

	Therefore, as long as $h(u_1,u_2)\ne 0$ and $g_1/h^2\not\in\mathbb Z$ or $g_2/h^3\not\in\mathbb Z$, one sees that $E_{I,J}$ is of positive Mordell-Weil rank where the point $Q$ is of infinite order.
	Since $C'$ is isomorphic to $E_{I,J}$, it follows that $C'$ is also of positive Mordell-Weil rank. 	Hence, there are infinitely many rational points $(t,z)\in C'(\mathbb Q)$, each giving rise to a value for $d$, by substituting in (\ref{eq:2}), therefore a twisted Edwards curve $E_{a,d}$ possessing the aforementioned rational points. That infinitely many of these curves are pairwise non-isomorphic over $\mathbb Q$ again follows from Proposition 6.1 in \cite{Edwards}.
	
\end{proof}
\begin{remark}
	Since $(0,-1), (0,1)$ are rational points on any twisted Edwards curve, one can show that if $u_{-1}=-1$, $u_1=1$, $u_2$, $u_3$ and $u_4$, $u_i\ne u_j$ if $i\ne j$, is a sequence in $\mathbb Z$, there are infinitely many Edwards curves
	on which $u_i$, $ i\in\{-1,1,2,3,4\}$, are the $y$-coordinates of rational points in $E_{a,d}(\mathbb Q)$.
\end{remark}

\section{ Huff curves with $S$-sequences of length $5$}

A {\em Huff curve} over a number field $k$ is defined by \begin{equation}\label{Huff1}
H_{a,b}: ax(y^2-1)=by(x^2-1),
\end{equation}
with $a^2\ne b^2$. Note that the points $(x,y)=(-1,\pm1),(0,0),(1,\pm1)$ are in $H_{a,b}(k)$. We prove that given $s_{-1}=-1$, $s_{0}=0$, $s_{1}=1$, $s_2,$ $s_3$ $\in k$, $s_{i}\neq s_{j}$ if $i\neq j$, there are infinitely many Huff curves on which these numbers are realized as the $x$-coordinates of rational points.

Assuming $(s_2,p)$ and $(s_3,q)$ are two points on $H_{a,b}$ yields
\begin{equation}\label{eq:1.1}
as_2(p^2-1)=bp(s_2^2-1),
\end{equation}
and
\begin{equation}\label{eq:1.2}
as_3(q^2-1)=bq(s_3^2-1),
\end{equation}
respectively. Using \eqref{eq:1.1} and \eqref{eq:1.2}, one obtains
\[ \frac{s_2(p^2-1)}{s_3(q^2-1)}= \frac{p(s_2^2-1)}{q(s_3^2-1)},\] therefore, one needs to consider the curve
\begin{equation*}
C': Apq^2-Ap-Bqp^2+Bq=0,
\end{equation*} where $A=s_3s_2^2-s_2$ and $B=s_2s_3^2-s_2$. Dividing both sides of the above equality by $q^3$ gives
\begin{equation*}
A\frac{p}{q}-A\frac{p}{q}\frac{1}{q^2}-B(\frac{p}{q})^2+B\frac{1}{q^2}=0.
\end{equation*} Substituting  $x=\frac{p}{q}$ and $y=\frac{1}{q^2}$ in the above equation yields the following quadratic curve
\[Ax-Axy-Bx^2+By=0,\]
on which we have the rational point $(x,y)=(1,1)$. Parametrizing the rational points on the latter quadratic curve gives
\begin{equation}\label{xx}
x=\frac{Bt-B}{At+B},
\end{equation}
\begin{equation}\label{xxx}
y=\frac{At(1-t)+B(1-t)^2}{At+B}.
\end{equation}
Now we have the following result.
\begin{theorem}
	Let $s_{-1}=-1,s_{0}=0,s_{1}=1,s_2, s_3,$ $s_m\ne s_n$ if $m\ne n$, be a sequence in $\mathbb Z$ such that $$h=-4 + A^2 - 3 A B + B^2\ne0$$ where $A$ and $B$ are defined as above, and either $g_1/h^2$ or $g_2/h^3$ are not integers, where $g_1,g_2$ are defined in (\ref{eq22}). There are infinitely many Huff curves described by
	\[H_{a,b}: ax(y^2-1)=by(x^2-1),\qquad a,b\in \mathbb Q,\quad a^2\ne b^2\]
	on which $s_m$, $-1\le m\le 3$, are the $x$-coordinates of rational points in $H_{a,b}(\mathbb Q)$. In other words, there are infinitely many Huff curves that possess an $S$-sequence where $S=\{s_i:-1\le i\le 3\}.$
\end{theorem}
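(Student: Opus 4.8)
The plan is to follow the template of the two preceding theorems, the only structural difference being that for a Huff curve the family parameter (essentially the ratio $b/a$) is not itself a rational function of $t$; instead, the requirement that the parametrised conic point come from a genuine rational point of $C'$ is what manufactures the elliptic curve we need.

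First I would set up the dictionary between rational points. Fix $s_2,s_3$, hence $A,B$, and fix $t$; then \eqref{xx}--\eqref{xxx} produce a rational point $(x,y)$ of the conic $Ax-Axy-Bx^2+By=0$. Since $x=p/q$ and $y=1/q^2$, this point lifts to a rational point $(p,q)$ of $C'$ precisely when $y$ is a nonzero rational square, in which case $q=1/\sqrt{y}$ and $p=xq$. Given such a lift, let $a$ and $b$ be proportional to $p(s_2^2-1)$ and $s_2(p^2-1)$ respectively; then \eqref{eq:1.1} holds by construction, and the defining equation of $C'$, being equivalent to $\tfrac{p(s_2^2-1)}{s_2(p^2-1)}=\tfrac{q(s_3^2-1)}{s_3(q^2-1)}$, forces \eqref{eq:1.2} as well, so $(s_2,p)$ and $(s_3,q)$ both lie in $H_{a,b}(\mathbb Q)$. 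For all but finitely many $t$ the resulting data is non-degenerate — in particular $a^2\ne b^2$, which fails only when $p\in\{\pm s_2,\pm 1/s_2\}$ — and together with the automatic points $(-1,\pm1),(0,0),(1,\pm1)$ this realises the $S$-sequence $\{-1,0,1,s_2,s_3\}$ on $H_{a,b}$.

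Thus everything reduces to producing infinitely many $t\in\mathbb Q$ with $y(t)\in(\mathbb Q^\times)^2$. The numerator of $y(t)$ in \eqref{xxx} factors as $(1-t)\bigl((A-B)t+B\bigr)$, so $y(t)$ is a square if and only if
\[z^2=(1-t)\bigl((A-B)t+B\bigr)(At+B),\]
a nonsingular cubic in $t$ as long as $AB(A+B)\ne 0$ (the degenerate sub-cases are excluded by the hypotheses or handled directly, being conics with a rational point). Bringing this cubic to Weierstrass form $E_{I,J}: y^2=x^3-27Ix-27J$ with $I,J\in\mathbb Z$ exactly as in the proofs above — the recipe of \cite[\S 2]{Stoll}, now with vanishing leading coefficient — we obtain a curve carrying an explicit rational point $Q$, namely the image of $(t,z)=(0,B)$. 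As before one then computes that
\begin{eqnarray}\label{eq22}
3Q=\left(\frac{g_1(s_2,s_3)}{h(s_2,s_3)^2},\ \frac{g_2(s_2,s_3)}{h(s_2,s_3)^3}\right),\qquad\textrm{where } g_1,g_2\in\mathbb Q[s_2,s_3]
\end{eqnarray}
and $h=-4+A^2-3AB+B^2$. By the Nagell--Lutz theorem, whenever $h\ne 0$ and at least one of $g_1/h^2$, $g_2/h^3$ is not an integer, $3Q$ cannot be a torsion point; hence $Q$ has infinite order and the curve has positive Mordell--Weil rank. Consequently $z^2=(1-t)((A-B)t+B)(At+B)$ has infinitely many rational points, furnishing infinitely many admissible values of $t$; discarding the finitely many that yield degenerate data, we obtain infinitely many Huff curves $H_{a,b}$ with the claimed $S$-sequence. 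That infinitely many of them are pairwise non-isomorphic over $\mathbb Q$ follows because the $j$-invariant of $H_{a,b}$ is a non-constant rational function of $b/a$, which varies with $t$, so each $\overline{\mathbb Q}$-isomorphism class accounts for only finitely many $t$.

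The main obstacle is the verification that $Q$ has infinite order. This hinges entirely on the explicit evaluation of $3Q$ in \eqref{eq22} and on checking that its coordinates are genuinely non-integral, as polynomial expressions in $s_2,s_3$, so that Nagell--Lutz applies — together with confirming that the chosen Weierstrass model really has integral coefficients. That computation is finite but heavy; everything preceding it is routine bookkeeping, parallel to the Edwards and twisted Edwards arguments.
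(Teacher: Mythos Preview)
Your proposal is correct and follows essentially the same route as the paper: reduce to the cubic $z^{2}=(At+B)(t-1)\bigl((B-A)t-B\bigr)$, use the rational point $(t,z)=(0,B)$, pass to an integral Weierstrass model, and apply Lutz--Nagell to $3Q$ with denominator $h=-4+A^{2}-3AB+B^{2}$. The only cosmetic difference is that the paper reaches Weierstrass form by the direct substitution $X=A(B-A)t$, $Y=A(B-A)z$ (keeping the $X^{2}$ term) rather than via the $I,J$ short form; your write-up is actually more explicit than the paper's about the dictionary back to $(a,b)$, the non-degeneracy $a^{2}\ne b^{2}$, and the pairwise non-isomorphism of the resulting Huff curves.
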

\begin{proof}
	Using the equalities \eqref{xx} and \eqref{xxx}, we obtain the following
	\begin{eqnarray*}
		p^2=\frac{x^2}{y}=\frac{B^2(-1+t)}{(B (-1 + t) - A t) (B + A t)}, \\
		q^2=\frac{1}{y}=\frac{(B + A t)}{(-1 + t) (B (-1 + t) - A t)}.
	\end{eqnarray*}
	In both cases we need
	$(B + A t)(-1 + t)(B (-1 + t) - A t)$ to be a square or in other words we need $t$ to be the $x$-coordinate of a rational point on the elliptic curve $C''$ defined by
	$$ z^2=(At+B)(t-1)(t(B-A)-B),$$
	with the following $k$-rational point $(t,z)=(0,B).$ The latter curve can be described by the following equation	
	$$ Y^2=X^3+((B-A)^2-AB)X^2-2AB(B-A)^2X+A^2B^2(B-A)^2,$$
	where $ A(B-A)t=X$ and $ A(B-A)z=Y$. This curve has the rational point \[R=(X,Y)=\left(0, AB(B-A) \right).\] Observing that  	
	\begin{eqnarray}\label{eq22}3R=\left(\frac{g_1(A,B)}{h(A,B)^2},\frac{g_2(A,B)}{h(A,B)^3} \right)\end{eqnarray} where $h(A,B)=-4 + A^2 - 3 A B + B^2$, one concludes as in the proof of Theorem \ref{thm1}.
\end{proof}

\section{General Huff curves with $S$-sequences of length $4$}

A {\em general Huff curve} over a number field $k$ is defined by \begin{equation}\label{Huff2}
G_{a,b}: x(ay^2-1)=y(bx^2-1),
\end{equation}
where $a,b\in k$ and $ab(a-b)\ne 0$. It is clear that the point $(x,y)=(0,0)\in G_{a,b}(k)$. We show that given $u_0=0,u_1,u_2,u_3$ in $k$, $u_i\ne u_j$ if $i\ne j$, there are infinitely many general Huff curves over which these points are realized as the $x$-coordinates of rational points.

We start by assuming that if $u_1$ is the $x$-coordinates of a point in $G_{a,b}(k)$, then one must have
$\displaystyle \frac{ay^2-1}{y}=\frac{bu_1^2-1}{u_1}$ or $\displaystyle \frac{a-i^2}{i}=\frac{bu_1^2-1}{u_1}$ for some $i\in k$.

Similarly, if $u_2$ is the $x$-coordinate of a point in $G_{a,b}$, then
$\displaystyle \frac{ay^2-1}{y}=\frac{bu_2^2-1}{u_2}$ or $\displaystyle \frac{a-j^2}{j}=\frac{bu_2^2-1}{u_2}$ for some $j\in k$. Thus, one obtains
$$ a=\frac{(bu_1^2-1)i+u_1i^2}{u_1}=\frac{(bu_2^2-1)j+u_2j^2}{u_2},$$ which gives the following quadratic curve
\begin{equation*}
S: Ai^2+Bj^2+Ciz+Djz=0,
\end{equation*}
where $A=-u_1u_2$, $B=u_1u_2$, $C=-u_1^2u_2b+u_2$, $D=bu_1u_2^2-u_1$. Then consider the line
\begin{equation*}
mP+nQ=(np:nq:m+nr)
\end{equation*}
connecting the rational points $P=(i:j:z)=(0: 0: 1)$ and $Q=(p: q: r)$ lying on $S\subset\mathbb{P}^2$. The intersection of $S$ and $mP+nQ$ yields the quadratic equation
$$n^2(Ap^2+Bq^2+Cpr+Dqr)+mn(Cp+Dq)=0.$$Using $P$ and $Q$ lying on $S$, one solves this quadratic equation and obtains formulae for the solution $(i:j:z)$ with the following parametrization:
\begin{eqnarray*}\label{eq:yy}
	i&=&np=Cp^2+Dpq,\\
	j&=&nq=Cpq+Dq^2,\\
	z&=&m+nr=-Ap^2-Bq^2.
\end{eqnarray*}


Now we obtain the following result.
\begin{theorem}
	Let $u_0=0$, $u_1$, $u_2$ and $u_3$ , $u_i\ne u_j$ if $i\ne j$, be a sequence in $k$. There are infinitely many general Huff curves described by
	$$G_{a,b}: x(ay^2-1)=y(bx^2-1),\qquad a,b\in k,\quad ab(a-b)\ne 0.
	$$
	on which $u_i$, $0\le i\le 3$, are the $x$-coordinates of rational points in $G_{a,b}(k)$. In other words, there are infinitely many general Huff curves that possess an $S$-sequence where $S=\{u_i:0\le i\le 3\}.$
\end{theorem}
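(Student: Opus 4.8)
The plan is to recast the problem entirely in terms of quadratic discriminants and then to exhibit an explicit rational substitution that solves all the resulting conditions at once, producing a whole one--parameter family of suitable curves with no auxiliary elliptic curve required. Rather than substituting the parametrization of $i$ obtained above into $a=\tfrac{(bu_1^2-1)i+u_1i^2}{u_1}$ and wrestling with the resulting quartic, I would treat the three conditions symmetrically. Substituting $x=u_i$ into $G_{a,b}$ gives the quadratic $au_iy^2-(bu_i^2-1)y-u_i=0$ in $y$, with discriminant $\Delta_i:=(bu_i^2-1)^2+4au_i^2$; so (for $a\neq0$) $u_i$ is the $x$--coordinate of a rational point of $G_{a,b}$ exactly when $\Delta_i$ is a square in $k$, while $u_0=0$ is always realised by $(0,0)$. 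Writing $v_i:=u_i^2$, I would look for $a,b$ and auxiliary $c_1,c_2,c_3\in k$ with $\Delta_i=(bv_i-1+2c_i)^2$; expanding, this is equivalent to the three \emph{linear} conditions $a-c_ib=\tfrac{c_i(c_i-1)}{v_i}$ on $(a,b)$, and then a rational point of $G_{a,b}$ above $x=u_i$ is $(u_i,\,-c_i/(au_i))$. Since $G_{a,b}$ is invariant under $(x,y)\mapsto(-x,-y)$, a coincidence $u_i=-u_j$ makes one of $u_i,u_j$ automatic, so one may assume $v_1,v_2,v_3$ are pairwise distinct.

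The three linear conditions in $(a,b)$ are simultaneously solvable precisely when the $3\times3$ determinant with rows $\bigl(1,\,-c_i,\,\tfrac{c_i(c_i-1)}{v_i}\bigr)$ vanishes, i.e.\ when $\sum_{\mathrm{cyc}}\tfrac{c_i(c_i-1)}{v_i}(c_j-c_k)=0$ --- a single equation in $(c_1,c_2,c_3)$. The key step is to solve it with the ansatz $c_i=\gamma/v_i+\delta$: substituting, the $\delta(\delta-1)$--part of the equation cancels identically, and collecting the remaining terms and factoring out $(v_1-v_2)(v_1-v_3)(v_2-v_3)$ --- a short elementary symmetric--function computation --- the equation reduces to the single linear relation $(2\delta-1)+\gamma\,\sigma_2/\sigma_3=0$, where $\sigma_2=v_1v_2+v_1v_3+v_2v_3$ and $\sigma_3=v_1v_2v_3$. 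Hence, assuming $\sigma_2\neq0$, for every $\delta\in k\setminus\{\tfrac12\}$ one takes $\gamma=(1-2\delta)\sigma_3/\sigma_2$, obtains a consistent system, and solves it for $a=a(\delta),\,b=b(\delta)\in k$ --- explicit rational functions of $\delta$ --- so that $G_{a,b}$ carries rational points above $x=u_0,u_1,u_2,u_3$. (If $\sigma_2=0$ the relation forces $\delta=\tfrac12$ with $\gamma$ free, and one repeats the argument using $\gamma$ as the family parameter.)

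Finally I would run the genericity count. From $av_i=c_i(bv_i-1+c_i)$ one reads off that $a\neq0$ forces every $c_i\neq0$, so the constructed points are honest affine points distinct from $(0,0)$; and specializing $\delta$ (say to $0$) gives a curve with $ab(a-b)\neq0$, so this holds for all but finitely many $\delta$, making $G_{a(\delta),b(\delta)}$ a bona fide general Huff curve possessing an $S$--sequence for $S=\{u_0,u_1,u_2,u_3\}$. As $\delta\mapsto(a(\delta),b(\delta))$ is non--constant (e.g.\ $b$ has a pole at $\delta=\tfrac12$), this is an infinite family of distinct curves, and --- by the same argument as in the earlier proofs (cf.\ Proposition 6.1 in \cite{Edwards}, or non--constancy of the $j$--invariant along the family) --- infinitely many of them are pairwise non--isomorphic. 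The genuinely delicate point is the middle paragraph: hitting on the ansatz $c_i=\gamma/v_i+\delta$ --- its form is dictated by wanting $v_ic_i$ to be affine in $v_i$, which is exactly what degenerates the determinant --- and checking that the consistency equation really reduces to a single \emph{linear} relation in $\gamma$ rather than to a genuinely quadratic, hence only sporadically solvable, one.
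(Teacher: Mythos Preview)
Your argument is correct and takes a genuinely different route from the paper's. The paper keeps $b$ as a free parameter, uses the conic parametrization of $(i,j)$ obtained just before the theorem (in terms of auxiliary variables $p,q$), substitutes to express $a$ as a function of $p,q$, and then imposes the $u_3$--condition, which produces a second conic (in projective coordinates $(q:T:Z)$) on which an explicit rational point is exhibited and parametrized. So the paper chains two conic parametrizations in an asymmetric way, carrying along $b,p$ as extra free parameters.

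Your approach is more symmetric and conceptually cleaner: you treat the three discriminant conditions on equal footing, linearize each via the completion--of--the--square trick $\Delta_i=(bv_i-1+2c_i)^2$, and then kill the resulting $3\times3$ consistency determinant with the ansatz $c_i=\gamma/v_i+\delta$. The verification that this reduces to the single linear relation $(2\delta-1)+\gamma\sigma_2/\sigma_3=0$ is correct (after the obvious column operations the determinant becomes, up to a Vandermonde factor in the $w_i=1/v_i$, exactly $\gamma(w_1+w_2+w_3)+(2\delta-1)$). What your method buys is a transparent one--parameter family $\delta\mapsto(a(\delta),b(\delta))$ with no nested parametrizations; what the paper's method buys is that one never has to guess the ansatz, since each step is a mechanical conic--with--a--point reduction.

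Two small points of presentation. First, the sentence ``specializing $\delta$ (say to $0$) gives a curve with $ab(a-b)\ne0$'' is not a proof valid for arbitrary $u_i$; the actual argument is the one you give a line later, namely that $a$, $b$ and $a-b$ each have a pole at $\delta=\tfrac12$ and hence are non--constant, so each vanishes for only finitely many $\delta$. Second, the reference to Proposition~6.1 of \cite{Edwards} is about Edwards curves, not Huff curves, so it does not literally apply here; your alternative (non--constancy of the $j$--invariant along the family) is the right justification, and in any case the paper's own proof of this theorem does not address pairwise non--isomorphism at all, so you are being more careful than required.
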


\begin{proof}
	Substituting the value for $i$ in $\displaystyle a=\frac{(bu_1^2-1)i+u_1i^2}{u_1}$ yields that
	\begin{equation*}
	\begin{aligned}
	a=&u_{{2}}^2 \left( b{u_{{1}}}^{2}-1 \right) ^{2}{p}^{4}-2\,u_{1}u_2
	\left( b{u_{{2}}}^{2}-1 \right)  \left( b{u_{{1}}}^{2}-1
	\right) {p}^{3}q+{{u_{{1}}}^{2} \left( b{u_{{2}}}^{2}-1
		\right) ^{2}{p}^{2}{q}^{2}}\\
	&-\frac{u_2\left( b{u_{{1}}}^{2}-1
		\right) ^{2}}{u_1}{p}^{2}
	+ \left( b{u_{{2}}}^{2}-1 \right)
	\left( b{u_{{1}}}^{2}-1 \right) pq.
	\end{aligned}
	\end{equation*}
	Now we assume that $(u_3,\ell)\in G_{a,b}(k)$. This yields that \begin{equation*}
	\begin{aligned}
	&pu_{{3}} \left( b{p}^{2}{u_{{1}}}^{3}u_{{2}}-bpq{u_{{1}}}^{2}{u_{{2}}}
	^{2}-{p}^{2}u_{{1}}u_{{2}}+pq{u_{{1}}}^{2}-b{u_{{1}}}^{2}+1 \right)\\
	&\left( bp{u_{{1}}}^{2}u_{{2}}-bqu_{{1}}{u_{{2}}}^{2}-pu_{{2}}+qu_{{1}
	} \right) {\ell}^{2}-u_{{1}} \left( b{u_{{3}}}^{2}-1 \right) \ell-u_{1}u_{{3}}=0.
	\end{aligned}
	\end{equation*}
	This can be rewritten as
	\begin{equation*}
	\begin{aligned}
	&Z^2(b^2p^4u_1^5u_2^2u_3-2bp^4u_1^3u_2^2u_3-b^2p^2u_1^4u_2u_3+p^4u_1u_2^2u_3+2bp^2u_1^2u_2u_3\\
	&-p^2u_2u_3)+q Z (-2b^2p^3u_1^4u_2^3u_3+2bp^3u_1^4u_2u_3+2bp^3u_1^2u_2^3u_3+b^2pu_1^3u_2^2u_3\\
	&-2p^3u_1^2u_2u_3-bpu_1^3u_3-bpu_1u_2^2u_3+pu_1u_3)+
	q^2 p^2 u_1^3u_3(bu_2^2-1)^2\\&
	-T Z u_1 \left( b{u_{{3}}}^{2}-1 \right)-T^2u_1u_{{3}}=0,
	\end{aligned}
	\end{equation*}where $T=1/\ell.$ One sees that the rational point $P=(q:T:Z)=(1 :0:u_1 (-1 + b u_2^2)/pu_2(-1+bu_1^2))$ lies on the quadratic curve above, hence we may parametrize the rational points on the quadratic curve above. This is obtained by considering the intersection of the line $dP+eQ$ where $Q=(q_1:q_2:q_3)$ is a point on the quadratic curve. In fact, this yields that
	\begin{align*}
	d &= pu_{2}(b{u_{1}}^{2} - 1)({q_{3}}^{2}b^{2}p^{4}
	{u_{1}}^{5}{u_{2}}^{2}{u_{3}} - 2{q_{3}}^{2}bp^{4}{u
		_{1}}^{3}{u_{2}}^{2}{u_{3}}\\
	&-{q_{3}}^{2}b^{2}p^{2}{u_{
			1}}^{4}{u_{2}}{u_{3}}
	+ {q_{3}}^{2}p^{4}{u_{1}}{u_{2}}^{2}{u_{3}}+ 2
	{q_{3}}^{2}bp^{2}{u_{1}}^{2}{u_{2}}{u_{3}}\\&-{q_{3}}^{2
	}p^{2}{u_{2}}{u_{3}} - {u_{1}}{q_{2}}{q_{3}}b{u_{3}
	}^{2} + {u_{1}}{q_{2}}{q_{3}}
	+ p^{2}{u_{1}}^{3}{u_{3}}{q_{1}}^{2}b^{2}{u_{2}
	}^{4}\\&- 2p^{2}{u_{1}}^{3}{u_{3}}{q_{1}}^{2}b{u_{2}}^{
		2}+ p^{2}{u_{1}}^{3}{u_{3}}{q_{1}}^{2}- 2{q_{1}}{q_{3
	}}b^{2}p^{3}{u_{1}}^{4}{u_{2}}^{3}{u_{3}}\\&+ 2{q_{1}}{q_{3}}bp^{3}{u_{1}}^{4}{u_{2}}{u
		_{3}} + 2{q_{1}}{q_{3}}bp^{3}{u_{1}}^{2}{u_{2}}^{3}
	{u_{3}} + {q_{1}}{q_{3}}b^{2}p{u_{1}}^{3}{u_{2}}^{2}{
		u_{3}}
	\\&- 2{q_{1}}{q_{3}}p^{3}{u_{1}}^{2}{u_{2}}{u_{3
	}}
	- {q_{1}}{q_{3}}bp{u_{1}}^{3}{u_{3}} - {q_{1}}{q_{
			3}}bp{u_{1}}{u_{2}}^{2}{u_{3}} + {q_{1}}{q_{3}}p{
		u_{1}}{u_{3}}\\&-{u_{1}}{u_{3}}{q_{2}}^{2}) ,
	\end{align*}
	\begin{align*}
	e &= {u_{1}}(b{u_{2}}^{2} - 1)( - p{u_{1}}^{3}{u_{3}}{q
		_{1}}b^{2}{u_{2}}^{2} + p^{2}{u_{3}}{q_{3}}{u_{2}}b^{
		2}{u_{1}}^{4} + p{u_{1}}{u_{3}}{q_{1}}b{u_{2}}^{2}\\&
	- 2p^{2}{u_{3}}{q_{3}}{u_{2}}b{u_{1}}^{2} + p
	{u_{1}}^{3}{u_{3}}{q_{1}}b+ {u_{1}}{q_{2}}b{u_{3}}
	^{2} + p^{2}{u_{3}}{q_{3}}{u_{2}} - {u_{1}}{q_{2}}\\&- p{u_{1}}{u_{3}}{q_{1}}) .
	\end{align*}
\end{proof}

\subsection*{Acknowledgements}
We would like to thank the referees for carefully reading our manuscript and for giving such constructive comments which substantially helped improving the presentation of the paper.

\end{document}